\documentclass[10pt,reqno]{amsart}


\usepackage{amsmath}
\usepackage{amsfonts}
\usepackage{amssymb}
\usepackage{amsthm}
\usepackage{graphicx}
\usepackage{epstopdf}
\usepackage{enumitem}
\usepackage{geometry}
\usepackage{hyperref}
\usepackage{tikz}
\usepackage[all]{xy}

\setcounter{MaxMatrixCols}{10}

\newtheorem{theorem}{Theorem}[section]
\newtheorem{definition}[theorem]{Definition}

\newtheorem{example}[theorem]{Example}
\newtheorem{proposition}[theorem]{Proposition}
\newtheorem{remark}[theorem]{Remark}

\newtheorem{lemma}[theorem]{Lemma}
\newtheorem{problem}[theorem]{Problem}
\newtheorem{note}[theorem]{Note}

\newcommand{\beq}{\begin{equation}}
\newcommand{\eeq}{\end{equation}}

\begin{document}

\title{Amenability, locally finite spaces, and bi-lipschitz embeddings}

\author{Valerio Capraro}
\address{University of Neuchatel, Switzerland}
\thanks{Supported by Swiss SNF Sinergia project CRSI22-130435}
\email{valerio.capraro@unine.ch}

\keywords{amenability, isoperimetric constant, embeddings into Hilbert spaces}

\subjclass[2000]{Primary 52A01; Secondary 46L36}

\date{}

\maketitle

\begin{abstract}
We define the isoperimetric constant for any locally finite metric space and we study the property of having isoperimetric constant equal to zero. This property, called Small Neighborhood property, clearly extends amenability to any locally finite space. Therefore, we start making a comparison between this property and other notions of amenability for locally finite metric spaces that have been proposed by Gromov, Lafontaine and Pansu, by Ceccherini-Silberstein, Grigorchuk and de la Harpe and by Block and Weinberger. We discuss possible applications of the property SN in the study of embedding a metric space into another one. In particular, we propose three results: we prove that a certain class of metric graphs that are isometrically embeddable into Hilbert spaces must have the property SN. We also show, by a simple example, that this result is not true replacing property SN with amenability. As a second result, we prove that \emph{many} spaces with \emph{uniform bounded geometry} having a bi-lipschitz embedding into Euclidean spaces must have the property SN. Finally, we prove a Bourgain-like theorem for metric trees: a metric tree with uniform bounded geometry and without property SN does not have bi-lipschitz embeddings into finite-dimensional Hilbert spaces.
\end{abstract}

\tableofcontents

\section{Introduction}

The isoperimetric constant of an infinite locally finite connected graph is a well-studied and useful invariant of the graph. Among other applications, it is used as a \emph{test} to see whether or not a graph can have bi-lipschitz embeddings into a Hilbert space. In fact, it follows by a famous theorem of Benjamini and Schramm (see \cite{Be-Sc97}, Theorem 1.5) and by one of Bourgain (see \cite{Bo86}, Theorem 1) that if an infinite locally finite connected graph of bounded degree has a bi-lipschitz embedding into a Hilbert space, then its isoperimetric constant must be equal to zero.

In recent years, people are getting interested in embeddings of locally finite metric spaces (not necessarily graphs) into Hilbert spaces, motivated by two breakthrough papers, one by Linial, London and Rabinovich\cite{Li-Lo-Ra95}, and one by Guoliang Yu\cite{Yu00}, that put in relation the theory of embedding general locally finite metric spaces into Hilbert spaces with important problems in Theoretical Computer Science and $K$-theory of $C^*$-algebras. Therefore, it would be important to have some generalization of the isoperimetric constant that is capable to give information about the possibility to embed a locally finite metric space into a Hilbert space.

The aim of this paper is indeed to introduce a definition of an isoperimetric constant and prove some results showing that the property of having positive isoperimetric constant is often an obstacle for the existence of certain embeddings into Hilbert spaces.

The structure of the paper is as follows:
\begin{itemize}
\item In Section \ref{se:isoperimetric} we define the isoperimetric constant of a locally finite metric space. We say that a locally finite metric space has property SN if its isoperimetric constant is equal to zero. We observe that the property SN extends to locally finite metric spaces the notion of amenability of a finitely generated group.
\item In Section \ref{suse:amenabilityvssn} we make a comparison between the property SN and the other notions of amenability that have been proposed in the past for spaces that are more general than Cayley graphs of finitely generated groups (see Propositions \ref{prop:ceccherinivsblock} and \ref{prop:caprarovsceccherini}).
\item Section \ref{se:application} is the main section of the paper. Here we prove some results concerning the application of the property SN as a test to check the existence of \emph{certain} embeddings into Hilbert spaces. We prove that for a certain class of metric graphs\footnote{The formal definition will be given later. Roughly speaking, we consider connected graphs such that every edge is labeled with a positive real number in such a way to induce a metric on the graph that is additive only along shortest paths.}, isometric embeddability into Hilbert spaces implies property SN (see Theorem \ref{prop:emedding}). We show, by a simple example, that the same result is not true replacing property SN with amenability. We prove that for a certain class of locally finite metric spaces, the existence of a bi-lipschitz embedding into a finite-dimensional Euclidean space implies property SN (see Theorem \ref{th:polygrowth}). Finally, we prove the following Bourgain-like theorem: an infinite metric tree with uniform bounded geometry and without property SN has no bi-lipschitz embeddings into finite-dimensional Hilbert spaces (see Theorem \ref{prop:metrictree}).
\item In Section \ref{se:zoomiso} we start the study of the relations among another invariant and the growth rate of a finitely generated group. We conclude stating a problem of our interest that seems to be open (see Problem \ref{prob:intermediategrowth}).
\end{itemize}

\textbf{Acknowledgements.} The author is grateful to Tullio Ceccherini-Silberstein, Antoine Gournay, Romain Tessera and Alain Valette for very useful conversations.

\section{The Isoperimetric Constant of a locally finite space}\label{se:isoperimetric}

Let $X=(V,E)$ be an infinite locally finite connected graph. There are many different ways to define the isoperimetric constant of $X$. The following appeared in a paper by McMullen (see \cite{McMu89}), but also more recently in \cite{Be-Sc97}, \cite{Ce-Gr-Ha99} and \cite{El-So05}.
The isoperimetric constant of $X$ is
\begin{align}\label{eq:isoperimetric}
\iota_1(X)=\inf\left\{\frac{|dB_1(A)|}{|A|}: A\subseteq V \text{ is finite and non-empty}\right\}
\end{align}
where $dB_1(A)$ is the set of vertices at distance 1 from $A$ (the distance is clearly the \emph{shortest-path distance}).\\

Our idea to extend the isoperimetric constant to any locally finite metric space\footnote{Recall that a metric space is called locally finite if every bounded set is finite.} is simple: as the isoperimetric constant of a graph, as defined above, is computed by making a comparison between the number of vertices in $A$ and the number of vertices that can be reached from $A$ by making one step, we define a number by making a comparison between the number of points in $A$ and the number of points that can be reached from $A$ in $k$ steps, where the notion of step is defined in terms of nearest points.

\begin{definition}\label{def:completechain}
Let $P\subseteq[0,\infty)$, with $0\in P$. A complete chain in $P$ is a finite subset $p_1,p_2,\ldots,p_n$ of $P$ such that
\begin{enumerate}
\item $p_1=0$
\item $p_i<p_j$, for all $i<j$
\item if $p\in P$ is such that $p_i\leq p\leq p_{i+1}$, then either $p=p_i$, or $p=p_{i+1}$.
\end{enumerate}
\end{definition}

\begin{definition}\label{def:simpleboundary}
Let $(X,d)$ be a metric space, let $A$ be a bounded subset of $X$ and let $k$ be a nonnegative integer. The discrete $k$-neighborhood of $A$, denoted by $dN_k(A)$, is the set of elements $x\in X$ such that there is a finite sequence $x_0,x_1,\ldots,x_l=x$ in $X$ such that
\begin{enumerate}
\item $x_0\in A$
\item $l\leq k$
\item The $x_i$'s are ordered in such a way that
$$
0=d(x_0,A)<d(x_1,A)<d(x_2,A)<\ldots<d(x_l,A)
$$
and this is a complete chain inside $P=\{d(y,A),y\in X\}$
\end{enumerate}
\end{definition}

\begin{definition}\label{def:discreteboundary}
The discrete $k$-boundary of a bounded subset $A$ of $X$ is $dB_k(A)=dN_k(A)\setminus A$.
\end{definition}

Roughly speaking, $dB_k(A)$ is the set of points that can be reached from $A$ in $k$ steps.

\begin{definition}\label{def:isoperimetric}
The isoperimetric constant of a locally finite metric space $X$ is

\begin{align}
\iota(X)=\sup_{k\geq1}\inf\left\{\frac{|dB_k(A)|}{|A|}: A\subseteq X, 0<|A|<\infty\right\}
\end{align}
\end{definition}

If $X$ is the Cayley graph of a finitely generated group, one can easily prove that $dB_k(A)=\{x\in X:0<d(x,A)\leq k\}$, where $d$ is the shortest-path metric on $X$. This implies, by standard arguments, that, for Cayley graphs, $\iota(X)=0$ if and only if $\iota_1(X)=0$. Furthermore, it is a simple exercise to show that a finitely generated group is amenable if and only if its isoperimetric constant (with respect to any finite symmetric generating set) is equal to zero. Therefore the following easy proposition holds.

\begin{proposition}\label{prop:isoperimetricvsamenability}
Let $X$ be the Cayley graph of a finitely generated group $G$ with respect to any fixed symmetric generating set. Then $G$ is amenable if and only if $\iota(X)=0$.
\end{proposition}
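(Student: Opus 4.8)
The plan is to reduce the statement to two facts already signalled in the discussion preceding it: first, that for a Cayley graph the discrete $k$-boundary $dB_k(A)$ is nothing but the metric annulus $\{x : 0 < d(x,A) \le k\}$, so that property SN coincides with the vanishing of the classical isoperimetric constant $\iota_1$; and second, the Følner characterization of amenability. I will spell out the two reductions and then chain them.

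First I would verify the description of $dB_k(A)$. Fix a finite nonempty $A \subseteq V$. Since the word metric takes integer values and $X$ is connected and infinite while $A$ is finite, the distance-to-$A$ function is unbounded, and along any geodesic toward $A$ it decreases by exactly $1$ at each step; hence $P = \{d(y,A) : y \in V\} = \{0,1,2,\dots\} = \mathbb{N}$. Consequently the only complete chain in $P$ beginning at $0$ is $0,1,2,\dots,l$, so a sequence $x_0,\ldots,x_l$ as in Definition \ref{def:simpleboundary} forces $d(x_i,A)=i$. Reading off $x=x_l$ gives $d(x,A)=l\le k$; conversely, any $x$ with $d(x,A)=m\le k$ admits such a sequence by walking along a geodesic from $A$ to $x$ through the vertices at distances $0,1,\dots,m$. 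Therefore $dN_k(A)=\{x:d(x,A)\le k\}$ and $dB_k(A)=\{x:0<d(x,A)\le k\}$, as claimed.

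With this in hand, the $k=1$ term of the supremum defining $\iota(X)$ is exactly $\iota_1(X)$, so $\iota(X)\ge\iota_1(X)\ge 0$ and the implication $\iota(X)=0\Rightarrow\iota_1(X)=0$ is immediate. For the reverse implication, note that a supremum of nonnegative quantities vanishes if and only if each of them does, so it suffices to show that $\iota_1(X)=0$ forces $\inf_A|dB_k(A)|/|A|=0$ for every fixed $k$. Choosing the metric to be right-invariant, one has $dN_k(A)=B_kA$, where $B_k$ is the ball of radius $k$ about the identity; writing each $s\in B_k$ as a word of length at most $k$ in the generators, the standard telescoping estimate $|sA\triangle A|\le\sum_i|s_iA\triangle A|\le k\cdot\max_{t\in S}|tA\triangle A|$ yields a bound of the shape $|dB_k(A)|=|B_kA\setminus A|\le C_k\,|dB_1(A)|$, with $C_k$ depending only on $k$ and the generating set, valid for every finite $A$. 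Applying this to a sequence $A=F_n$ with $|dB_1(F_n)|/|F_n|\to 0$ (which exists precisely because $\iota_1(X)=0$) gives $|dB_k(F_n)|/|F_n|\to 0$, as needed, establishing $\iota(X)=0\iff\iota_1(X)=0$. It then remains to identify $\iota_1(X)=0$ with amenability: a sequence $F_n$ with $|dB_1(F_n)|/|F_n|\to 0$ is exactly a (left-)Følner sequence, since $dB_1(F_n)=SF_n\setminus F_n$ and $|tF_n\setminus F_n|\le|dB_1(F_n)|$ for each $t\in S$, and a finitely generated group admits a Følner sequence if and only if it is amenable; chaining the two equivalences gives the proposition.

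The part carrying the actual content is the reverse implication in the middle step, namely controlling the $k$-boundary by the $1$-boundary uniformly over the Følner sets; everything else is bookkeeping about the complete-chain structure in $\mathbb{N}$ together with the quotation of the Følner criterion. I expect the only place demanding genuine care is keeping the left/right conventions for the invariant metric and for the Følner sets consistent, so that the telescoping estimate and the identity $dN_k(A)=B_kA$ line up correctly.
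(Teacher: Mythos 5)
Your proposal is correct and follows essentially the same route the paper takes: the paper justifies this proposition by the sketch preceding it (identify $dB_k(A)$ with the annulus $\{x: 0<d(x,A)\leq k\}$, deduce $\iota(X)=0\iff\iota_1(X)=0$ by standard arguments, and invoke the F{\o}lner characterization of amenability), and your write-up simply fills in those three steps honestly, including the telescoping bound $|dB_k(A)|\leq C_k|dB_1(A)|$ that the paper leaves as ``standard.''
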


\begin{definition}\label{def:sn}
A locally finite metric space $(X,d)$ is said to have the Small Neighborhood property, property SN, if $\iota(X)=0$.
\end{definition}

\section{Relations between amenability and property SN}\label{suse:amenabilityvssn}

As observed in Proposition \ref{prop:isoperimetricvsamenability}, the property SN extends the notion of amenability to any locally finite metric space. This section is devoted to the investigation of the relations between property SN and the other notions of amenability that have been proposed in the past for locally finite metric space. This section is also the occasion to recall some definitions that will be useful in the sequel.\\

Fix the following notation: if $(X,d)$ is a metric space, $A\subseteq X$, $\alpha\in\mathbb R_{>0}$, we denote by $cN_\alpha(A)=\{x\in X:d(x,A)\leq\alpha\}$ and by $cB_\alpha(A)=cN_\alpha(A)\setminus A$.\\

The first definition is taken from \cite{Ce-Gr-Ha99} and it is equivalent to a previous condition discovered in \cite{Gr-La-Pa81}, Lemma 6.17. The proof of the equivalence can be found in \cite{Ce-Gr-Ha99}, Theorem 32, that gives also many other equivalent definitions. See also Theorem 4.9.2 in \cite{Ce-Co10} for an easier presentation that does not use the technical definition of a pseudo-group of transformations.

\begin{definition}\label{def:amenable2}
A locally finite metric space is said to be amenable if for all $k\geq0$, there is a finite and non-empty subset $A$ of $X$ such that $|cN_k(A)|<2|A|$.
\end{definition}

The second notion of amenability, proposed by Block and Weinberger (see \cite{Bl-We92}, Section 3), is a bit more technical.

\begin{definition}
A metric space $(X,d)$ is said to have coarse bounded geometry if it admits a quasi-lattice; namely, there is $\Gamma\subseteq X$ such that
\begin{enumerate}
\item There exists $\alpha>0$ such that $cN_\alpha(\Gamma)=X$,
\item For all $r>0$ there exists $K_r>0$ such that, for all $x\in X$,
\begin{align}
\left|\Gamma\cap B_r(x)\right|\leq K_r
\end{align}
where $B_r(x)$ stands for the open ball of radius $r$ about $x$.
\end{enumerate}
\end{definition}

\begin{definition}\label{def:amenable3}
A metric space $(X,d)$ with coarse bounded geometry, given by the quasi-lattice $\Gamma$, is said to be amenable if for all $r,\delta>0$, there exists a finite subset $U$ of $\Gamma$ such that
\begin{equation}\label{eq:folner}
\frac{|\partial_rU|}{|U|}<\delta
\end{equation}
where $\partial_rU=\{x\in\Gamma:d(x,U)<r, d(x,\Gamma\setminus U)<r\}$.
\end{definition}

The natural question is about the possible logic implications among these two notions of amenability and the property SN. The following discussion gives an almost complete summary of the situation.\\

Let us start from comparing Definitions \ref{def:amenable2} and \ref{def:amenable3}. Remark 42 in \cite{Ce-Gr-Ha99} contains almost everything and we want to make formal that argument. First of all, the fact that there are locally finite metric spaces that do not have coarse bounded geometry\footnote{For instance, attach over any natural number $n$ a set $A_n$ with $n$ elements and define the metric to be 1 between points in the same $A_n$ and $n+m$ between a point in $A_n$ and one in $A_m$, with $n\neq m$.} and, conversely, there are spaces with coarse bounded geometry that are not locally finite, forces to consider the problem in the restricted class of locally finite spaces with coarse bounded geometry. We have the following result:

\begin{proposition}\label{prop:ceccherinivsblock}
Let $(X,d)$ be a locally finite metric space with coarse bounded geometry. It is amenable in the sense of Definition \ref{def:amenable2} if and only if it is amenable in the sense of Definition \ref{def:amenable3}.
\end{proposition}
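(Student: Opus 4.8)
The plan is to move back and forth between the finite subsets $A\subseteq X$ appearing in Definition \ref{def:amenable2} and the finite subsets $U\subseteq\Gamma$ appearing in Definition \ref{def:amenable3} by means of a nearest-point projection onto the quasi-lattice. First I would fix the quasi-lattice $\Gamma$ with density constant $\alpha$ (so that $cN_\alpha(\Gamma)=X$) and ball bounds $K_r$, choose a map $\pi:X\to\Gamma$ with $d(x,\pi(x))=d(x,\Gamma)\leq\alpha$ for every $x$, and set $V_\gamma=\pi^{-1}(\gamma)$. Each cell satisfies $\gamma\in V_\gamma\subseteq cN_\alpha(\gamma)$, so it is non-empty and, by local finiteness, finite. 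Every finite $A\subseteq X$ then sits between $\pi(A)$ and $\pi^{-1}(\pi(A))$, and conversely every finite $U\subseteq\Gamma$ yields the finite set $\pi^{-1}(U)$; the whole argument consists in showing that these operations send F\o{}lner-type sets to F\o{}lner-type sets.

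For the implication Definition \ref{def:amenable3} $\Rightarrow$ Definition \ref{def:amenable2}, I would fix $k$, put $r=k+2\alpha+1$, and take a finite $U\subseteq\Gamma$ with $|\partial_rU|/|U|$ as small as needed. Setting $A=\pi^{-1}(U)$, a point $x\in cB_k(A)$ lies in some cell $V_\gamma$ with $\gamma\notin U$ while being within $k$ of a point $y$ of some cell $V_{\gamma'}$ with $\gamma'\in U$; running the triangle inequality through $\gamma,x,y,\gamma'$ then gives $d(\gamma,U)\leq k+2\alpha<r$, so $\gamma\in\partial_rU$. Hence $cB_k(A)\subseteq\pi^{-1}(\partial_rU)$, and since $|A|\geq|U|$ I would conclude by comparing $|\pi^{-1}(\partial_rU)|$ with $|U|$. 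The reverse implication is symmetric: given $r,\delta$, I would apply Definition \ref{def:amenable2} with $k=\lceil r+2\alpha\rceil$ to obtain a finite $A$ with $|cN_k(A)|<2|A|$, project to $U=\pi(A)\subseteq\Gamma$, and check that each $\gamma\in\partial_rU$ forces its cell to meet $cB_k(A)$, so that $\partial_rU$ is controlled by the $X$-boundary of $A$.

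The main obstacle in both directions is that Definition \ref{def:amenable2} counts points of $X$ while Definition \ref{def:amenable3} counts points of $\Gamma$, and the two cardinalities are linked only through the sizes $|V_\gamma|$ of the Voronoi cells. The lower bound $|V_\gamma|\geq1$ comes for free, but a uniform upper bound $|V_\gamma|\leq M$ is exactly what is needed to convert a bound on $|\pi^{-1}(S)|$ into a bound on $|S|$, and back. Extracting such a uniform bound is the delicate point, and this is where I expect the hypotheses of local finiteness and coarse bounded geometry (the constants $K_r$) to be combined; I would concentrate the bulk of the work on establishing and then exploiting this comparison between counting in $X$ and counting in $\Gamma$, after which both F\o{}lner implications reduce to the routine cell-counting estimates sketched above.
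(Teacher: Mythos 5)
Your architecture (nearest-point projection $\pi:X\to\Gamma$, Voronoi cells $V_\gamma=\pi^{-1}(\gamma)$, transfer of F\o{}lner sets in both directions) is genuinely different from the paper's proof, which never introduces cells: the paper first replaces $X$ by $\Gamma$ by citing quasi-isometry invariance of amenability in the sense of Definition \ref{def:amenable2} (\cite{Ce-Gr-Ha99}, Proposition 38), and then argues entirely inside $\Gamma$, via the $(1+\varepsilon)$-amplification of the F\o{}lner condition and the choice $U=cN_k(A)$, for which $\partial_kU=cB_{2k}(A)$. However, your proof has a genuine gap, and it sits exactly at the point you flag as ``delicate'': the uniform bound $|V_\gamma|\leq M$ does \emph{not} follow from local finiteness together with coarse bounded geometry, and is in fact false in general. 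Coarse bounded geometry bounds $|\Gamma\cap B_r(x)|\leq K_r$, i.e.\ the number of \emph{lattice} points in balls; it says nothing about $|X\cap B_r(x)|$, and local finiteness makes each cell finite without any uniformity. Concretely, let $X=\bigcup_{n\in\mathbb Z}C_n$, where $C_n$ is a clump of $2^{|n|}$ points of mutual distance $1/2$, with $d(C_n,C_m)=|n-m|$ for $n\neq m$, and let $\Gamma=\{\gamma_n\}$ consist of one point per clump. Then $X$ is locally finite and $\Gamma$ is a quasi-lattice ($\alpha=1/2$, $K_r\leq 2r+1$), yet $|V_{\gamma_n}|=2^{|n|}$ is unbounded. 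Without a uniform $M$ both of your counting steps collapse: in one direction $|\pi^{-1}(\partial_rU)|$ cannot be controlled by $|\partial_rU|$, and in the other $|U|=|\pi(A)|$ can be arbitrarily small compared with $|A|$, so smallness of $|cB_k(A)|/|A|$ gives no bound on $|\partial_rU|/|U|$.

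Worse, this is not a repairable technicality under the stated hypotheses: in the example above $\Gamma$ is isometric to $\mathbb Z$ (up to the clump offsets), hence amenable in the sense of Definition \ref{def:amenable3}, while for every finite $A\subseteq X$ the clumps immediately outside the maximal integer blocks of $\pi(A)$ have total weight exceeding $\sum_{n\in\pi(A)}2^{|n|}\geq|A|$, so that $|cN_1(A)|\geq 2|A|$ and $X$ is \emph{not} amenable in the sense of Definition \ref{def:amenable2}. So the comparison between counting in $X$ and counting in $\Gamma$ that your argument needs cannot be extracted from the hypotheses at all; what is really needed is a bounded-geometry (uniformly locally finite) assumption on $X$, under which your scheme does work with $M=C_\alpha$. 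This is precisely the difficulty that the paper's own proof absorbs into the citation of Proposition 38 of \cite{Ce-Gr-Ha99}, a quasi-isometry invariance statement established there in the bounded-geometry setting. Two further, smaller gaps in your sketch would also need attention even granted $M$: (i) the raw ``factor $2$'' form of Definition \ref{def:amenable2} is too weak for the implication towards Definition \ref{def:amenable3}; you need the $(1+\varepsilon)$-version obtained by iterating neighborhoods ($|cN_{nk}(A)|\geq(1+\varepsilon)^n|A|$), a step the paper carries out explicitly; (ii) a point $\gamma\in\partial_rU\cap U$ need not have its cell meet $cB_k(A)$ (its cell may lie entirely inside $A$), so you must route such $\gamma$ to a nearby point of $\Gamma\setminus U$ and use $K_r$ to control the multiplicity of that assignment before comparing cardinalities.
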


\begin{proof}
Let $\Gamma\subseteq X$ be a quasi-lattice. By definition of a quasi-lattice, $X$ is quasi-isometric to $\Gamma$. Since amenability in the sense of Definition \ref{def:amenable2} is invariant under quasi-isometries (see \cite{Ce-Gr-Ha99}, Proposition 38), we can restrict our attention on $\Gamma$. So we need to prove that the F{\o}lner condition on $\Gamma$ in Equation (\ref{eq:folner}) is equivalent to the condition on $\Gamma$ in Definition \ref{def:amenable2}. This is a pretty standard argument that we can summarize as follows. First of all, we observe that amenability in the sense of Definition \ref{def:amenable2} for $\Gamma$ is equivalent to require that for all $k\in\mathbb N$ and for all $\varepsilon>0$ there is a finite non-empty subset $A$ of $\Gamma$ such that $|cN_k(A)|<(1+\varepsilon)|A|$. One implication is indeed trivial; conversely, suppose that for all $k$ there is $A$ such that $|cN_k(A)|<2|A|$ and assume by contradiction that there are $\bar k$ and $\varepsilon$ such that for all $A$ one has $|cN_{\bar k}(A)|\geq(1+\varepsilon)|A|$. Define $\bar K=\bar k n$, where $n$ is a positive integer such that $(1+\varepsilon)^n\geq2$. One gets $|cN_{\bar K}(A)|\geq2|A|$, for all $A$, contradicting the hypothesis. Therefore, it remains to prove that the statement
\begin{align}\label{stat:amenability}
\forall k,\forall\varepsilon,\exists A\subseteq\Gamma : |cN_k(A)|<(1+\varepsilon)|A|
\end{align}
is equivalent to Block-Weinberger's definition of amenability for $\Gamma$. Also in this case one implication is trivial and let us prove only that the condition in Statement (\ref{stat:amenability}) implies Block-Weinberger's amenability. Let $k\in\mathbb N$, $\delta>0$. By Condition (\ref{stat:amenability}), there is a finite subset $A$ of $\Gamma$ such that $|cN_{2k}(A)|<(1+\delta)|A|$. Set $U=cN_k(A)$ and let us prove that it gives Block-Weinberger's amenability. Indeed, first observe that $\partial_kU=cB_{2k}(A)$. Therefore,
$$
\frac{|\partial_kU|}{|U|}=\frac{|cB_{2k}(A)|}{|A|}\cdot\frac{|A|}{|cN_k(A)|}<\delta\cdot1=\delta
$$
as required.

\end{proof}

About the relation between Property SN and amenability in the sense of Definition \ref{def:amenable2} we can give the following

\begin{proposition}\label{prop:caprarovsceccherini}
If $X=(V,E)$ is a locally finite connected graph, then it has the property SN if and only if it is amenable in the sense of Definition \ref{def:amenable2}.
\end{proposition}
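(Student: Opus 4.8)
The plan is to reduce everything to the metric neighborhoods $cN_k$ and then invoke the amplification argument already used in the proof of Proposition~\ref{prop:ceccherinivsblock}. The crucial first step is to observe that, for a connected graph, the discrete boundary coincides with the metric boundary. Indeed, with the shortest-path metric all distances are nonnegative integers, so for any finite non-empty $A$ the set $P=\{d(y,A):y\in X\}$ is a gap-free subset of $\{0,1,2,\dots\}$ containing $0$: if $y$ realizes distance $m$, the vertices along a geodesic from $y$ to $A$ realize every intermediate value $0,1,\dots,m$. Hence every complete chain in $P$ starting at $0$ is a run of consecutive integers $0,1,\dots,l$, and the conditions defining $dN_k(A)$ force $d(x_i,A)=i$ and $d(x,A)=l\le k$. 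Conversely, given $x$ with $d(x,A)=m\le k$, reading a geodesic from $x$ to $A$ backwards produces exactly such a chain. Therefore $dN_k(A)=cN_k(A)=\{x:d(x,A)\le k\}$, and consequently $dB_k(A)=cB_k(A)=\{x:0<d(x,A)\le k\}$.

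Once this identification is in place, I would rewrite the isoperimetric constant. Since $A\subseteq cN_k(A)$ we have $|dB_k(A)|=|cN_k(A)|-|A|$, so $|dB_k(A)|/|A|=|cN_k(A)|/|A|-1\ge 0$. Because each inner infimum is nonnegative, the supremum over $k$ vanishes if and only if every inner infimum vanishes; that is, $\iota(X)=0$ if and only if for every $k\ge 1$ and every $\varepsilon>0$ there is a finite non-empty $A$ with $|cN_k(A)|<(1+\varepsilon)|A|$.

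The final step is to match this reformulation with Definition~\ref{def:amenable2}. One direction is immediate (take $\varepsilon=1$). For the converse I would reuse the amplification trick from the proof of Proposition~\ref{prop:ceccherinivsblock}: amenability gives, for each $k$, a set $A$ with $|cN_k(A)|<2|A|$; if the stronger $(1+\varepsilon)$-condition failed for some $\bar k,\bar\varepsilon$, then $|cN_{\bar k}(B)|\ge(1+\bar\varepsilon)|B|$ for every finite $B$, and iterating $n$ times (using the identity $cN_{n\bar k}(A)=cN_{\bar k}^{\,n}(A)$, valid because $\bar k$ is an integer and $X$ is a graph) yields $|cN_{n\bar k}(A)|\ge(1+\bar\varepsilon)^n|A|\ge 2|A|$ once $(1+\bar\varepsilon)^n\ge 2$, contradicting amenability at scale $n\bar k$.

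The main obstacle is really the first step: verifying carefully that the combinatorial, chain-based definition of $dB_k$ collapses to the metric ball difference. The two points needing care are the gap-freeness of $P$ (so that complete chains are genuinely runs of consecutive integers) and the composition identity $cN_{a+b}(A)=cN_a(cN_b(A))$ for integers $a,b$, which is where the geodesic nature of a graph, as opposed to an arbitrary locally finite metric space, is genuinely used; in a space lacking intermediate points this identity, and hence the amplification argument, would break down.
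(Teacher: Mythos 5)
Your proof is correct, and it is genuinely more self-contained than the paper's. The paper disposes of the proposition in two lines: it invokes Proposition 4.3 of \cite{El-So05}, which states that a locally finite connected graph is amenable in the sense of Definition \ref{def:amenable2} if and only if $\sup_{k\geq1}\inf_A |cB_k(A)|/|A|=0$, and then asserts the identification $cB_k(A)=dB_k(A)$ for connected graphs, referring back to an observation (stated for Cayley graphs, without proof) after Definition \ref{def:isoperimetric}. You prove both ingredients by hand: the identification $dN_k(A)=cN_k(A)$ via the gap-freeness of $P=\{d(y,A):y\in X\}$ (geodesics realize all intermediate distances, so complete chains collapse to runs of consecutive integers), and the equivalence between the $(1+\varepsilon)$-condition and the weaker $|cN_k(A)|<2|A|$ condition of Definition \ref{def:amenable2} via the amplification trick, which is the same argument the paper itself deploys inside the proof of Proposition \ref{prop:ceccherinivsblock}. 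What the paper's route buys is brevity and a pointer to the literature; what yours buys is independence from \cite{El-So05} and an actual proof of the chain-collapse assertion that the paper leaves implicit.

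One small conceptual correction to your closing remark: the claim that the composition identity $cN_{a+b}(A)=cN_a\bigl(cN_b(A)\bigr)$ is where the graph structure is \emph{genuinely used}, and that without it \emph{the amplification argument would break down}, is not accurate. The amplification only needs the inclusion $cN_a\bigl(cN_b(A)\bigr)\subseteq cN_{a+b}(A)$, which is just the triangle inequality and holds in every metric space; indeed the paper runs the identical amplification for quasi-lattices in arbitrary metric spaces with coarse bounded geometry in Proposition \ref{prop:ceccherinivsblock}. The step that genuinely requires the graph structure is your first one, $dN_k(A)=cN_k(A)$: that is exactly what fails for general locally finite spaces, as Example \ref{ex:SNnoamenable} shows, where the discrete neighborhoods are much smaller than the metric ones, so property SN holds while amenability in the sense of Definition \ref{def:amenable2} fails.
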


\begin{proof}
By Proposition 4.3 in \cite{El-So05} a locally finite connected graph is amenable if and only if

$$
\sup_{k\geq1}\inf\left\{\frac{|cB_k(A)|}{|A|}: A\subseteq X, 0<|A|<\infty\right\}=0
$$

By the observation right after our Definition \ref{def:isoperimetric}, for locally finite connected graphs one has $cB_k(A)=dB_k(A)$ and therefore $X$ is amenable if and only if $\iota(X)=0$.
\end{proof}

For general locally finite metric spaces, property SN and amenability behave quite differently. The following example shows the existence of non-amenable locally finite metric spaces having the property SN.

\begin{example}\label{ex:SNnoamenable}
{\rm Let
\begin{align*}
X=\left\{x_n=\sum_{k=1}^n\frac{1}{k}:n\geq1\right\}
\end{align*}
with the metric induced by $\mathbb R$. This space is certainly locally finite. It is straightforward to show that it has the property SN. Indeed, The sequence of finite subsets $A_n=\{x_1,\ldots,x_n\}$ have the property that $dN_k(A_n)=A_{n+k}$ and therefore $\iota(X)=0$. On the other hand this space is not amenable in the sense of Ceccherini-Silberstein, Grigorchuk and de la Harpe. Indeed, we now show that $|cN_1(A)|\geq2|A|$, for all $A$. Let $|A|=k$ and let $n_A$ be the maximum index $n$ such that $x_n\in A$; thus, $n_A\geq k$. This simple observation implies that
$$
\frac{1}{n_A+1}+\frac{1}{n_A+2}+\ldots+\frac{1}{n_A+k}\leq1
$$
therefore $x_{n_A+1},\ldots,x_{n_A+k}\in cN_1(A)\setminus A$, that shows that $|cN_1(A)|\geq2k$.}
\end{example}

\section{The property SN as a test for embeddings into Hilbert spaces}\label{se:application}

In this section we want to present some applications of the property SN. The spirit of this section is to prove that in many cases the property SN can be used as a test to check the existence of \emph{certain} embeddings into Hilbert spaces; namely, the lack of the property SN is often an obstacle to the existence of \emph{certain} embeddings into Hilbert spaces. In fact, the theory of embedding (isometrically, quasi-isometrically, bilipschitzly,...) a locally finite metric space into a well-understood metric space, as a Banach space or a Hilbert space, has a long and interesting story all over the 20th century and still alive, after the breakthrough papers by Linial, London and Rabinovich \cite{Li-Lo-Ra95} and Guoliang Yu \cite{Yu00} showing unexpected interplay between this theory, Computer Science and K-theory of $C^*$-algebras. In this context it is important to have properties that obstacle the existence of such embeddings, to be used as a test. We believe that the lack of property SN can be an obstacle for the existence of bi-lipschitz embeddings of metric spaces into Hilbert spaces, as well as the lack of amenability is an obstacle to the existence of bi-lipschitz embeddings of a graph into a Hilbert space. In fact, a famous result of Benjamini and Schramm (see \cite{Be-Sc97}, Theorem 1.5) says that a locally finite connected graph of bounded degree with positive isoperimetric constant contains a tree with positive isoperimetric constant such that the inclusion map is a bi-lipschitz embedding. It is now a classical result of Bourgain (see \cite{Bo86}, Theorem 1) that such trees do not have bi-lipschitz embeddings into Hilbert spaces. In this sense, the lack of amenability is an obstacle to the existence of bi-lipschitz embeddings into Hilbert spaces. Observe that if we try to have a similar result for locally finite metric spaces that are not necessarily graphs, we encounter some problems. E.g., the space of Example \ref{ex:SNnoamenable} is not amenable in the sense of Definition \ref{def:amenable2}, but it is isometrically embedded into a Hilbert space.\\

We believe that, in order to find such an obstacle, one should replace amenability with property SN. In this section we prove some results in this direction. The first one concerns isometric embeddings into Hilbert spaces. Later we will be able to give some results about bi-lipschitz embeddings, but we will be forced to restrict the target space considering only finite-dimensional Hilbert spaces.\\

With the following definition, we generalize the notion of metric tree proposed by Ghys and de la Harpe at pp.27-28 in \cite{Gh-dlHa90}.\\

Let $X=(V,E)$ be a connected simple graphs (i.e. without loops and multiple edges) of bounded degree such that every edge $xy$ is labeled with a positive real number denoted by $d(x,y)$. We also require the following two compatibility properties:

\begin{enumerate}
\item If $x_0,x_1,\ldots,x_{n-1},x_n$ and $y_0,y_1,\ldots,y_{n-1},y_n$ are shortest paths in the unlabeled graph connecting the same points (i.e. $x_0=y_0$ and $x_n=y_n$), then

\begin{align}\label{eq:shortpath}
\sum_{i=1}^nd(x_{i-1},x_i)=\sum_{i=1}^nd(y_{i-1},y_i)
\end{align}

\item If $x_0,x_1\ldots,x_{n-1},x_n$ is a shortest path in the unlabeled graph connecting $x$ to $y$ and $y_0,y_1,\ldots,y_{m-1},y_m$, with $m>n$, is another path connecting $x$ to $y$, then
\begin{align}\label{eq:metriccompatibility}
\sum_{i=1}^nd(x_{i-1},x_i)<\sum_{i=1}^md(y_{i-1},y_i)
\end{align}

\end{enumerate}

It is clear that one can define a metric on the set of vertices just adding the various weights encountered along a shortest path. Conditions (\ref{eq:shortpath}) and (\ref{eq:metriccompatibility}) gives some compatibility between the metric and the underlying graph structure: the metric is additive only along shortest paths. If the underlying graph is a tree labeled with positive real numbers, then we obtain metric trees in the sense of Ghys and de la Harpe and our additional properties are automatically satisfied.

\begin{definition}\label{def:tripods}
A tripod is a graph given by a vertex $v\in V$ with three neighbors $v_1,v_2,v_3$ such that there is no connection between the $v_i$'s. A semi-tripod is a graph given by a vertex $v$ with three neighbors $v_1,v_2,v_3$ such that there is at most one connection between the $v_i$'s. A metric tripod (resp. metric semi-tripod) is a tripod (resp. semi-tripod) equipped with positive weights verifying the compatibility properties in (\ref{eq:shortpath}) and (\ref{eq:metriccompatibility}).
\end{definition}

\begin{lemma}\label{lem:tripod}
Metric tripods and metric semi-tripods cannot be isometrically embedded into a Hilbert space.
\end{lemma}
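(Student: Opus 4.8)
The plan is to show that the four-point metric space underlying a metric tripod already fails to embed isometrically into any Hilbert space, and that the single extra edge permitted in a semi-tripod does not help. The whole argument rests on one elementary feature of a Hilbert space $H$, namely strict convexity: if $x,y,z\in H$ satisfy $\|x-z\|=\|x-y\|+\|y-z\|$, then (for $x\neq y\neq z$) the vectors $x-y$ and $y-z$ are positive multiples of one another, so $y$ lies strictly between $x$ and $z$ on the segment $[x,z]$. I would first record this fact, which is immediate from the equality case of the triangle inequality $\|u+w\|\le\|u\|+\|w\|$ in an inner product space.

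Next I would compute the metric on a tripod with central vertex $v$ and leaves $v_1,v_2,v_3$. Writing $a=d(v,v_1)$, $b=d(v,v_2)$, $c=d(v,v_3)$, the absence of edges among the $v_i$ forces every shortest path between two leaves to pass through $v$, so by the additivity condition (\ref{eq:shortpath}) we obtain $d(v_1,v_2)=a+b$, $d(v_1,v_3)=a+c$ and $d(v_2,v_3)=b+c$. The crucial observation is that the first two are exact equalities in the triangle inequality based at $v$, that is $d(v_1,v_2)=d(v_1,v)+d(v,v_2)$ and $d(v_1,v_3)=d(v_1,v)+d(v,v_3)$.

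Now suppose an isometric embedding into $H$ exists, and identify each vertex with its image. Placing $v$ at the origin and writing $v_1=-a\,u$ for a unit vector $u$, the betweenness relation coming from $d(v_1,v_2)=a+b$ forces $v_2$ to be a positive multiple of $u$, whence $v_2=b\,u$; likewise $v_3=c\,u$. Therefore $d(v_2,v_3)=\|b\,u-c\,u\|=|b-c|$, while the metric demands $d(v_2,v_3)=b+c>|b-c|$, a contradiction. For a semi-tripod I may assume the single extra edge joins $v_2$ and $v_3$; since $v_1$ still has degree one, the equalities $d(v_1,v_2)=a+b$ and $d(v_1,v_3)=a+c$ persist, and the same computation again pins down $d(v_2,v_3)=|b-c|$ inside $H$. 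But the compatibility condition (\ref{eq:metriccompatibility}), applied to the one-edge shortest paths from $v$ to $v_2$ and from $v$ to $v_3$ against the two-edge detours through the opposite leaf, gives $b<c+d(v_2,v_3)$ and $c<b+d(v_2,v_3)$, hence $d(v_2,v_3)>|b-c|$, and we reach the same contradiction.

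The step I expect to require the most care is the semi-tripod case: I must verify that admitting the edge $v_2v_3$ does not manufacture a shortcut that spoils the two betweenness relations I rely on — it does not, precisely because $v_1$ has degree one — and I must extract the \emph{strict} inequality $d(v_2,v_3)>|b-c|$ from (\ref{eq:metriccompatibility}) rather than from the additivity used for the pure tripod. Everything else is a direct consequence of strict convexity, so once the two betweenness relations and the lower bound on $d(v_2,v_3)$ are in hand, the contradiction follows at once.
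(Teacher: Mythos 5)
Your proof is correct and follows essentially the same route as the paper's: both use the two through-$v$ distances involving the degree-one leaf, invoke the equality case of the triangle inequality in a Hilbert space to force all four points onto a line (so the remaining distance equals $|b-c|$), and then contradict the strictness in compatibility axiom (\ref{eq:metriccompatibility}). The only cosmetic differences are that you treat the tripod explicitly (the paper dismisses it as "even simpler") and you phrase the final contradiction as two strict triangle inequalities rather than the paper's comparison of the paths $v_3,v,v_1,v_2$ and $v_3,v,v_2$, which is the same inequality after cancelling the common arm.
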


\begin{proof}
We prove that semi-tripods are not isometrically embeddable into a Hilbert space. The proof for tripods is even simpler. Let us suppose that $v_1$ and $v_2$ are connected and let $\beta$ be their distance. Let us denote by $\alpha_i=d(v,v_i)$. By the definition of the metric, we have $d(v_2,v_3)=\alpha_2+\alpha_3$ and $d(v_1,v_3)=\alpha_1+\alpha_3$. Suppose, by contradiction, that the semi-tripod is isometrically embedded into a Hilbert space. Since in a Hilbert space, the metric is additive only along lines, it follows that $v_1,v,v_2,v_3$ belong to the same line and therefore $\beta=|\alpha_1-\alpha_2|$. Suppose, for instance, $\alpha_2\geq\alpha_1$, then one would have that the path $v_3,v,v_1,v_2$ has length $\alpha_3+\alpha_1+\alpha_2-\alpha_1=\alpha_3+\alpha_2$ which is the same as the length of the path $v_3,v,v_2$. This contradicts the compatibility axiom in (\ref{eq:metriccompatibility}).
\end{proof}

\begin{theorem}\label{prop:emedding}
Let $X$ be a metric graph verifying (\ref{eq:shortpath}), (\ref{eq:metriccompatibility}) and such that the resulting metric is locally finite. If $X$ embeds isometrically into a Hilbert space, then it has the property SN.
\end{theorem}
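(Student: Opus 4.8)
The plan is to convert the hypothesis into a purely combinatorial restriction on the underlying graph by means of Lemma \ref{lem:tripod}, and then to show that this restriction forces $X$ to be, up to the weights, a line.

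First I would record the local consequence of isometric embeddability. Fix a vertex $v$ and three of its neighbours $v_1,v_2,v_3$, and write $\alpha_i=d(v,v_i)$ for the corresponding edge weights; note that a single edge is always the (unique) shortest path between its endpoints, so these are genuine distances. If two of the $v_i$ are not adjacent, their graph distance is exactly $2$, every shortest path between them has the same weight by (\ref{eq:shortpath}), and the path through $v$ shows that this weight is $\alpha_i+\alpha_j$. Consequently, if among $v_1,v_2,v_3$ there is at most one edge, the metric that $X$ induces on $\{v,v_1,v_2,v_3\}$ is exactly that of a metric tripod (no edge) or metric semi-tripod (one edge). Since any subset of a space that embeds isometrically into a Hilbert space again embeds isometrically, Lemma \ref{lem:tripod} is contradicted. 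Hence, for every vertex $v$, any three neighbours of $v$ span at least two edges; equivalently, the complement of the graph induced on the neighbourhood $N(v)$ is a matching.

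The heart of the argument is to upgrade this local density condition to a global structure theorem: \emph{an infinite connected graph of bounded degree in which the complement of every neighbourhood is a matching has all vertices of degree at most $2$, and is therefore a ray or a bi-infinite path.} I would argue by contradiction. If $v$ has degree at least $3$, then $N(v)$ is a clique with a matching removed, so it is dense and has diameter at most $2$; I would then track how an edge is allowed to leave this set. The matching condition forces every outside neighbour of a vertex $a\in N(v)$ to be adjacent to the neighbours of $v$ that $a$ already sees, which drags new vertices into mutual adjacency with the whole cluster. Iterating, the cluster containing $v$ closes up into a finite clique-minus-matching from which no edge escapes; bounded degree caps its size, and connectedness then forces $X$ to equal this finite cluster, contradicting infiniteness. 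Carefully ruling out every escaping configuration (in particular the borderline degree-$3$ case, where the neighbourhood is a single path $v_1v_2v_3$) is the step I expect to be the main obstacle, since one must check that each attempted extension reintroduces a forbidden semi-tripod.

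Once $X$ is known to be a ray or a bi-infinite path carrying positive weights, property SN is immediate and robust against the weighting. For each fixed $k$ I would take $A_n$ to be a block of $n$ consecutive vertices. Because the ambient graph is one-dimensional, the set of points reachable from $A_n$ within $k$ distance-levels, in the sense of Definition \ref{def:isoperimetric}, consists of at most $k$ vertices beyond each end of the block, so $|dB_k(A_n)|\le 2k$ uniformly in $n$. Therefore $|dB_k(A_n)|/|A_n|\le 2k/n\to 0$, which gives $\inf_A |dB_k(A)|/|A|=0$ for every $k$ and hence $\iota(X)=0$. (If $X$ happens to be finite the statement is trivial, taking $A=X$.) Thus $X$ has property SN.
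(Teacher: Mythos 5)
Your route is genuinely different from the paper's and can be made to work, but as written it has one real gap --- exactly the step you flag yourself: the global structure theorem (infinite, connected, bounded degree, complement of every neighbourhood a matching $\Rightarrow$ maximum degree $2$) is asserted via a heuristic ``the cluster closes up'' picture rather than proved. Your two outer steps are sound: conditions (\ref{eq:shortpath}) and (\ref{eq:metriccompatibility}) do give $d(v_i,v_j)=\alpha_i+\alpha_j$ for non-adjacent neighbours of $v$, so the four-point subspace is isometric to a metric tripod or semi-tripod and Lemma \ref{lem:tripod} applies to it as a subspace of the embedded space; and on a weighted ray or line, $dB_k(A)$ is exactly the set of points whose distance to $A$ is among the $k$ smallest positive values of $\{d(y,A):y\in X\}$, at most two points per value, so $|dB_k(A_n)|\leq 2k$ and SN follows. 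The missing middle step is in fact true, and your mechanism closes it as follows. Suppose $\deg(v)\geq3$. If no vertex of $N(v)$ has a neighbour outside $\{v\}\cup N(v)$, that set emits no edge and connectedness makes $X$ finite, a contradiction; so pick $a\in N(v)$ with a neighbour $w\notin\{v\}\cup N(v)$. In $N(a)$ the pair $\{v,w\}$ is a non-edge, so the matching condition forces every other vertex of $N(a)$ to be adjacent to both $v$ and $w$; since $a$ misses at most one vertex of $N(v)$ and $\deg(v)\geq3$, there is $b\in N(a)\cap N(v)$, hence $a,b\in N(v)\cap N(w)$. The same observation applied to any $x\in N(v)\cap N(w)$ gives $N(x)\subseteq\bigl(N(v)\cap N(w)\bigr)\cup\{v,w\}$. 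Consequently a vertex $y\in N(v)\setminus N(w)$ can be adjacent to no vertex of $N(v)\cap N(w)$ (adjacency would force $y\in N(v)\cap N(w)$), so $y$ has at least two non-neighbours inside $N(v)$, violating the matching condition; therefore $N(v)\subseteq N(w)$. Now $C=\{v,w\}\cup N(v)$ emits no edge at all: $N(x)\subseteq C$ for every $x\in N(v)$, and a neighbour $z\notin C$ of $w$ would have at most one non-neighbour in $N(v)\subseteq N(w)$, hence a neighbour $x\in N(v)$, forcing $z\in N(x)\subseteq C$. Connectedness gives $X=C$, which is finite --- a contradiction.

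It is worth comparing what each approach buys. The paper argues the contrapositive and never needs a classification: assuming SN fails, it transfers the failure to the unlabeled graph $X^u$ by proving $dB_k(A)\subseteq cB_k(A)$ (this is where the compatibility axioms enter), concludes that $X^u$ has positive isoperimetric constant, and then extracts a \emph{single} tripod or semi-tripod from sphere growth: some vertex in a sphere around a base point has two neighbours in the next sphere and one in the previous one. That is combinatorially much lighter --- one forbidden configuration suffices --- but it yields only property SN. Your argument, once completed as above, proves a strictly stronger statement: an infinite metric graph of this class that embeds isometrically into a Hilbert space must be a weighted ray or bi-infinite path (and such graphs do embed isometrically into $\mathbb{R}$, by placing vertices at cumulative weights), i.e.\ an exact classification of the isometrically embeddable spaces in this class, at the price of the harder structure theorem. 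Both proofs ultimately rest on Lemma \ref{lem:tripod}.
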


\begin{proof}
By Lemma \ref{lem:tripod}, it suffices to show that a graph without SN contains tripods or semi-tripods. Denote by $X^u=(V,E^u)$ the underlying connected graph; i.e. the connected graph obtained by taking out the labels from $X$. Let $d_u$ be the shortest-path distance on $X^u$. Given $A\subseteq V$ and $k\geq1$, let $cN_k(A)=\{v\in V: d_u(v,A)\leq k\}$ and $cB_k(A)=cN_k(A)\setminus A$. We first show that for all finite subsets $A$ of $X$ and for all $k\geq1$, one has $dB_k(A)\subseteq cB_k(A)$ (notice that $dB_k$ refers to $d$ and $cB_k$ refers to $d_u$). Indeed, let $v\in dB_k(A)$ and let $v_0,v_1,\ldots,v_l$ be such that $v_0\in A$, $v_l=v$, $l\leq k$ and

\begin{align}\label{eq:firstchain}
0=d(v_0,A)<d(v_1,A)<\ldots<d(v_l,A)
\end{align}

is a complete chain in $P=\{d(v,A): v\in V\}$. Now, let $\gamma$ be a shortest path in $X^u$ connecting $v_l$ to $A$ and let $k'$ be its length. We have to prove that $k'\leq k$. Let $w_0,w_1,\ldots,w_{k'}$, with $w_{k'}=v_l=v$, be the vertices encountered along this shortest path. We will now show that
\begin{align}\label{eq:secondchain}
0=d(w_0,A)<d(w_1,A)<\ldots<d(w_{k'},A)
\end{align}
By contradiction, suppose that there is $i$ such that $d(w_i,A)\leq d(w_{i-1},A)$. By definition of the metric $d$, which is just the sum of the various edges encountered along a shortest path, and by the compatibility axioms (\ref{eq:shortpath}) and (\ref{eq:metriccompatibility}) a shortest path connecting $w_i$ to $A$ must have length strictly less than $i$. This contradicts the fact that $\gamma$ is a shortest path.

Now, there are $k'+1$ numbers appearing in (\ref{eq:secondchain}) and, moreover, the first one and the last one equal respectively the first one and the last one in (\ref{eq:firstchain}). The numbers appearing in (\ref{eq:firstchain}) form a complete chain and this implies that the numbers appearing in (\ref{eq:secondchain}) must be less; i.e. $k'+1\leq k+1$, as required.\\

Now, we have assumed that $X$ does not have the property SN and therefore, there is $k\geq1$, such that

$$
\inf\left\{\frac{|dB_k(A)|}{|A|}:A\subseteq V,0<|A|<\infty\right\}>0
$$

Now, by the previous step, we know that $dB_k(A)\subseteq cB_k(A)$ and therefore the unlabeled graph $X^u$ has positive $k$-isoperimetric constant

$$
\iota_k(X^u)=\inf\left\{\frac{|cB_k(A)|}{|A|}:A\subseteq V,0<|A|<\infty\right\}>0
$$

Now, $X^u$ has bounded degree and therefore, by a standard argument, also its isoperimetric constant is positive. It now suffices to show that any connected graph with positive isoperimetric constant contains a tripod or a semi-tripod. Let $v_0$ be a fixed vertex in $X^u$ and consider the sets $A_n=dB_n(v_0)$. Since the isoperimetric constant is positive, there are $n\geq2$ and $v\in dB_n(v_0)$ such that $v$ is connected to at least two vertices $v_1,v_2$ in $dB_{n+1}(v_0)$. By construction $v$ is also connected to some $v_3\in dB_{n-1}(v_0)$. Now, $v_1$ and $v_2$ cannot be connected to $v_3$, otherwise they would belong to $dB_{n}(v_0)$. Therefore, we have found a tripod or semi-tripod $C=\{v,v_1,v_2,v_3\}$.
\end{proof}

Observe that this proposition would have been false replacing the property SN with amenability in the sense of Definition \ref{def:amenable2}, as Example \ref{ex:SNnoamenable} shows.\\\\

Now we want to prove two results concerning bi-lipschitz embeddings into finite-dimensional Hilbert spaces.

\begin{definition}\label{def:uniformboundedgeometry}
A locally finite metric space $(X,d)$ is said to have uniform bounded geometry if there is a constant $C>0$ such that for all $r>0$ and for all $x,y\in X$ one has
$$
\frac{|B(x,r)|}{|B(y,r)|}\leq C
$$
where $B(x,r)$ stands for the closed ball of radius $r$ about $x$.
\end{definition}

We recall that a locally finite metric space $X$ is said to have bounded geometry if for all $r>0$, there is a constant $C_r$ such that for all $x\in X$, one has $|B(x,r)|\leq C_r$. It is then clear that a space with uniform bounded geometry has also bounded geometry. The converse is not true. We will use uniform bounded geometry mainly through the following simple

\begin{lemma}\label{lem:uniformbounded}
Let $X$ be an infinite locally finite metric space with uniform bounded geometry. Suppose that there are a sequence of points $x_n\in X$, a sequence of radii $r_n>0$ and a constant $M>0$ such that $|B(x_n,r_n)|\leq M$, for all $n$. Then $\limsup_{n\rightarrow\infty}r_n<\infty$.
\end{lemma}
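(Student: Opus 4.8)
The plan is to argue by contradiction, reducing everything to a single application of the uniform bounded geometry inequality. Suppose the conclusion fails, so that $\limsup_{n\to\infty}r_n=\infty$; then there is a subsequence along which the radii diverge, and after relabeling we may simply assume $r_n\to\infty$. Fix once and for all an arbitrary basepoint $x_0\in X$. The whole argument hinges on the fact that balls around $x_0$ grow without bound.

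The first step is therefore to establish that $|B(x_0,r)|\to\infty$ as $r\to\infty$. This is where the two standing hypotheses on $X$ enter: local finiteness guarantees that each $B(x_0,r)$ is finite, while infiniteness of $X$ guarantees that these nested sets exhaust $X$, since every point lies at finite distance from $x_0$ and hence in some $B(x_0,r)$. An increasing family of finite sets whose union is infinite must have cardinalities tending to infinity, which gives the claim. I expect this to be the only genuinely substantive point, and even it is routine.

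The second step is to feed this into uniform bounded geometry. For each $n$, applying Definition \ref{def:uniformboundedgeometry} with the pair of centers $x_0$ and $x_n$ at the common radius $r_n$ yields
$$
|B(x_0,r_n)|\leq C\,|B(x_n,r_n)|\leq CM.
$$
Since $r_n\to\infty$, the left-hand side tends to infinity by the first step, while the right-hand side is the constant $CM$; this contradiction shows $\limsup_{n\to\infty}r_n<\infty$, as required. The only care needed is to keep the inequality oriented correctly so that the small upper bound $M$ on the balls $B(x_n,r_n)$ forces a small upper bound on the growing balls $B(x_0,r_n)$, which is impossible.
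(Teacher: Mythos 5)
Your proof is correct and follows essentially the same route as the paper's: both argue by contradiction, compare $|B(x_0,r_n)|$ with $|B(x_n,r_n)|$ at the common radius $r_n$ via the uniform bounded geometry constant $C$ to get the bound $CM$, and contradict the fact that balls around a fixed point in an infinite locally finite space must have unbounded cardinality. The only cosmetic difference is that the paper phrases the endgame as ``every ball has at most $CM$ points, hence the space is finite,'' whereas you run the same fact in the other direction by letting $|B(x_0,r_n)|\to\infty$ first.
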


\begin{proof}
Suppose, by contradiction, that $\limsup_{n\rightarrow\infty}r_n=\infty$. Fix arbitrarily $x\in X$ and $r>0$. Let $n$ such that $r_n\geq r$. Since $X$ has uniform bounded geometry, one has
$$
\frac{|B(x,r)|}{|B(x_n,r_n)|}\leq\frac{|B(x,r_n)|}{|B(x_n,r_n)|}\leq C
$$
therefore $|B(x,r)|\leq CM$, where the constants $C$ and $M$ are universal. This means that the space is bounded and therefore it is finite.
\end{proof}

Let $x\in X$, denote by

\begin{align}
\Gamma^x=\{r>0:|B(x,r)|>|B(x,s)|, \text{ for all } s<r\}
\end{align}

Roughly speaking, $\Gamma^x$ is the set of radii such that the corresponding ball about $x$ gets bigger. Since $(X,d)$ is locally finite, the sets $\Gamma^x$ are countable. Let us suppose that their elements are listed in increasing order; i.e. we will write $\Gamma^x=\{r_1^x,r_2^x,\ldots\}$, where $r_n^x<r_{n+1}^x$, for all $n$.

\begin{definition}\label{def:polynomialgrowthspace}
The space $(X,d)$ is said to have polynomial increment at point $x$ if there is $C>0$ such that
$$
r_n^x\leq n^C
$$
for all $n$.
\end{definition}

\begin{theorem}\label{th:polygrowth}
Let $(X,d)$ be a locally finite space with uniform bounded geometry and polynomial increment in at least one point $x$. If $(X,d)$ has a bi-lipschitz embedding into a finite-dimensional Hilbert space, then it has the property SN.
\end{theorem}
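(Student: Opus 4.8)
The plan is to prove property SN directly from the definition. Since $dB_k(A)\subseteq dB_{k+1}(A)$, the inner infimum in Definition \ref{def:isoperimetric} is non-decreasing in $k$, so it suffices to show that for each fixed $k\ge1$ one has $\inf\{|dB_k(A)|/|A|\}=0$; equivalently, to produce finite sets $A$ with arbitrarily small relative $k$-boundary. The candidates are the balls $A_n=B(x,r^x_n)$ centred at the point $x$ of polynomial increment. The starting observation is that, reading off the complete-chain definition, $dB_k(A)$ is exactly the metric boundary $cB_{\sigma_k}(A)=\{y:0<d(y,A)\le\sigma_k\}$, where $\sigma_k=\sigma_k(A)$ is the $k$-th smallest positive value of the function $d(\cdot,A)$. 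Consequently, choosing a nearest point of $A$ and using the triangle inequality, $dB_k(A_n)\subseteq B(x,r^x_n+\sigma_k(A_n))\setminus B(x,r^x_n)$: the $k$-boundary of a ball is trapped in a metric annulus of inner radius $r^x_n$ and width $\sigma_k(A_n)$.

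First I would turn the bi-lipschitz embedding $f\colon X\to\mathbb R^N$, with $\lambda\,d(p,q)\le\|f(p)-f(q)\|\le\Lambda\,d(p,q)$, together with uniform bounded geometry, into polynomial volume growth of $X$. The mechanism is a scale-invariant doubling estimate. The Euclidean ball of radius $2\Lambda r$ about $f(x)$ is covered by at most $N_0=N_0(N,\lambda,\Lambda)$ Euclidean balls of radius $\lambda r/2$; since $\|f(p)-f(q)\|\ge\lambda\,d(p,q)$, the $f$-preimage of each small ball has $X$-diameter at most $r$ and hence lies in a single ball $B(z,r)$. As $f(B(x,2r))$ is contained in the big Euclidean ball, summing over the cover and comparing each $|B(z,r)|$ with $|B(x,r)|$ through uniform bounded geometry gives $|B(x,2r)|\le K\,|B(x,r)|$ for a constant $K$ independent of $r$. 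Iterating yields $V(R):=|B(x,R)|\le\Theta\,R^{D}$ with $D=\log_2K$.

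Next I would locate, for fixed $k$, a scale at which the relevant annulus is thin, using polynomial increment to make a pigeonhole argument close. With the convention $r^x_0=0$ one has $V(r^x_0)=1$, so for any integer $T$ the telescoping identity gives $\prod_{i=1}^{T}V(r^x_{ik})/V(r^x_{(i-1)k})=V(r^x_{Tk})$. Polynomial increment, $r^x_{Tk}\le(Tk)^{C}$, combined with the volume bound of the previous step, forces $V(r^x_{Tk})\le\Theta\,(Tk)^{CD}$. Every factor of the product is at least $1$, so its smallest factor is at most $(\Theta\,(Tk)^{CD})^{1/T}$, which tends to $1$ as $T\to\infty$. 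Hence for every $\varepsilon>0$ there is an index $n$ (a multiple of $k$) with $V(r^x_{n+k})\le(1+\varepsilon)\,V(r^x_n)$; that is, the metric annulus between the jump-radii $r^x_n$ and $r^x_{n+k}$ carries only an $\varepsilon$-fraction of the mass of $B(x,r^x_n)$.

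The crux, and the step I expect to be the main obstacle, is to align the annulus supplied by the previous step with the annulus trapping $dB_k(A_n)$, i.e. to control the width $\sigma_k(A_n)$. If one can show $r^x_n+\sigma_k(A_n)\le r^x_{n+k}$ at the chosen scale, then $|dB_k(A_n)|\le V(r^x_{n+k})-V(r^x_n)\le\varepsilon\,V(r^x_n)=\varepsilon|A_n|$, and letting $\varepsilon\to0$ proves property SN. The difficulty is that $\sigma_k(A_n)$ is a combinatorial quantity read off the successive values of $d(\cdot,A_n)$, and these need not track the spheres of $x$: when the distance values $r^x_m$ bunch together, a single discrete shell of $A_n$ can straddle many spheres, so the naive estimate $d(y,x)\le d(y,A_n)+r^x_n$ is too lossy and the width can outrun $r^x_{n+k}-r^x_n$. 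This is exactly where polynomial increment must be invoked a second time, to limit the bunching of the $r^x_m$, and where the averaged choice of $n$ must be strengthened to a scale that is simultaneously slow-growth and free of such bunching. Once the width is controlled at a good scale, the three steps combine to give $\inf_A|dB_k(A)|/|A|=0$ for every $k$, hence $\iota(X)=0$.
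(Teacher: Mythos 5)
Your preparatory steps are correct and, modulo packaging, reproduce the paper's own strategy: the test sets are the balls $A_n=B(x,r^x_n)$; the bi-lipschitz embedding into $\mathbb{R}^N$ together with uniform bounded geometry yields a doubling bound $|B(x,2r)|\le K\,|B(x,r)|$ (the paper quotes Lemma 2.1 of \cite{La-Pl01} for the covering fact you prove by hand); and the contradiction between polynomial increment of the radii $r^x_n$ and the exponential volume gain forced by a failure of SN is the same mechanism, run by the paper over dyadic windows $[2^r,2^{r+1}]$ and by you through a geometric-mean pigeonhole --- a cosmetic difference. Your reduction to fixed $k$, the identification $dB_k(A)=cB_{\sigma_k}(A)$, the annulus trapping, and the selection of a scale with $V(r^x_{n+k})\le(1+\varepsilon)V(r^x_n)$ are all sound.

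The step you flag as the crux, however, is a genuine gap, and the repair you sketch cannot close it. You need $\sigma_k(A_n)\le r^x_{n+k}-r^x_n$, equivalently $dN_k(B_n)\subseteq B_{n+k}$; this is exactly the paper's observation (1), which it asserts ``follows basically by definition''. It does not: take $X\subseteq\mathbb{R}$ containing $x=0$, $a=5$, $w=-6$, $z=7$ (completed by points $\pm m$, $m\ge 20$, and rescaled, so that the space is infinite, has uniform bounded geometry and polynomial increment at $x$, and embeds isometrically into $\mathbb{R}$). Then $A_1=B(x,r^x_1)=\{0,5\}$ and $r^x_2=6$, yet $d(z,A_1)=2<6=d(w,A_1)$, so the first discrete shell of $A_1$ is $\{7\}$, a point outside $B(x,r^x_2)$, and $\sigma_1(A_1)=2>1=r^x_2-r^x_1$. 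A planar variant ($x=(0,0)$, $(5,0)$, $(5,2)$, $(7,0)$, $(0,6)$) even gives $|dN_1(B_1)|>|B_2|$, so not even a cardinality inequality survives. Note that in these examples the radii $r^x_m$ are well separated: the failure is not caused by ``bunching'' of the $r^x_m$, but by off-centre points $a\in A_n$ pulling points of distant spheres metrically close to $A_n$; since neither polynomial increment (which constrains only the sizes of the radii) nor uniform bounded geometry constrains this shell geometry, invoking polynomial increment ``a second time'' cannot supply the missing inequality. So your argument is incomplete precisely where the paper's own argument is unjustified --- what your honest impasse actually reveals is that the containment $dN_k(B_n)\subseteq B_{n+k}$ is the real mathematical content here (it holds, e.g., on Cayley graphs, where discrete shells of balls about $x$ coincide with spheres about $x$), and any complete proof must establish it, or some workable surrogate, from the hypotheses rather than ``by definition''.
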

\begin{proof}
Suppose, by contradiction, that $(X,d)$ has a bi-lipschitz embedding into a finite-dimensional Hilbert space and does not have the property SN. Denote by $B_n$ the closed ball of radius $r_n^x$ about $x$. Since $(X,d)$ does not have the property SN, there is $k\geq1$ such that

\begin{align}
\inf\left\{\frac{|dN_k(B_n)|}{|B_n|}:n\in\mathbb N\right\}>1
\end{align}

Let $\varepsilon>0$ such that $|dN_k(B_n)|\geq(1+\varepsilon)|B_n|$, for all $n$. Let us observe explicitly two simple facts:\\

\begin{enumerate}
\item $dN_k(B_n)=B_{n+k}$. This follows basically by definition of complete chain and of the numbers $r_n^x$.
\item For $r\in\mathbb N$, denote by $\Gamma^x_r=\Gamma^x\cap[2^r,2^{r+1}]$. Then one has $\lim\sup_{r\rightarrow\infty}|\Gamma^x_r|=\infty$. This follows from the fact that the sequence $r_n^x$ has polynomial increment, while the intervals $[2^r,2^{r+1}]$ have exponential increment.\\
\end{enumerate}

The first observation allows the following chain of inequality:

\begin{align}\label{eq:chain}
|B(x,2^{r+1})|=|dN_k(B(x,2^{r+1}-\alpha_1))|\geq(1+\varepsilon)|B(x,2^{r+1}-\alpha_1)|\geq\ldots\geq(1+\varepsilon)^s|B(x,2^r)| \end{align}

where the $\alpha_i$'s are chosen in the obvious way. Observe that $s$ depends on $r$ but, from the second of the previous observations, can be made arbitrarily big with $r$. Now $X$ has uniform bounded geometry and then it has also bounded geometry and therefore, the following number is well-defined: $K_r=\max\{|B(x,2^r)|:x\in X\}$. Since the space has uniform bounded geometry, we have $|B(x,2^r)|\geq C^{-1}K_r$, for all $x$, and therefore, using (\ref{eq:chain}) we have

\begin{align}\label{eq:nodoubling}
|B(x,2^{r+1})|\geq(1+\varepsilon)^sC^{-1}K_r
\end{align}

where it is important to keep in mind that $s$ can be made arbitrarily big with $r$. We want to show that this is a contradiction. Indeed, it is well-known that if a metric space $(X,d)$ has a bi-lipschitz embedding into a finite-dimensional Hilbert space, then $X$ has the property that there is a universal constant $L$ such that every ball of radius $2t$ can be covered by $L$ balls of radius $t$ (for a proof see, for instance, Lemma 2.1. in \cite{La-Pl01}). Applying this property with $t=2^r$ and majorizing the cardinality of each ball of radius $2^r$ with $K_r$, one has $|B(x,2^{r+1})|\leq LK_r$, where $L$ does not depend on $r$. This contradicts (\ref{eq:nodoubling}).
\end{proof}

Now we use the proof of Theorem \ref{th:polygrowth} to prove a Bourgain-like theorem for metric trees. Indeed, Bourgain's theorem says that a tree with positive isoperimetric constant does not have any bi-lipschitz embedding into a Hilbert space. It is then natural to ask the following question: is it true that a metric tree without property SN does not have any bi-lipschitz embedding into Hilbert spaces? In this generality, the answer is clearly negative. Indeed, as observed by Sergei Ivanov in \cite{MO2}, the 2-regular rooted tree with weights $10^k$, where $k$ denotes the number of edge that one needs to reach the root, has a bi-lipschitz embedding into the real line (and it does not have the property SN). The question becomes more interesting if we add the hypothesis that the metric tree has uniform bounded geometry.

\begin{problem}
Let $T$ be an infinite metric tree with uniform bounded geometry and without property SN. Is it true that it does not have any bi-lipschitz embedding into a Hilbert space?
\end{problem}

A positive answer would be a cute generalization of Bourgain's theorem. We now give a positive answer in the case when only embeddings into finite-dimensional Hilbert spaces are allowed.

\begin{theorem}\label{prop:metrictree}
Let $T$ be an infinite metric tree with uniform bounded geometry and without property SN. Then $T$ does not admit any bi-lipschitz embedding into a finite dimensional Euclidean space.
\end{theorem}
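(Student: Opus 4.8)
The plan is to reduce Theorem \ref{prop:metrictree} to the machinery already developed in the proof of Theorem \ref{th:polygrowth}. The key realization is that a metric tree with uniform bounded geometry automatically has polynomial increment at every point, so that Theorem \ref{th:polygrowth} applies directly once this is established. First I would fix a root $x\in T$ and examine the set $\Gamma^x=\{r_1^x,r_2^x,\ldots\}$ of radii at which the ball $B(x,r)$ strictly grows. The goal is to show there is a constant $C$ with $r_n^x\leq n^C$ for all $n$.

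To get this polynomial bound, I would argue that uniform bounded geometry forces the edge weights along any branch of the tree to be bounded above and below. Indeed, suppose the weights along some ray from the root could grow without control; then one could find points $x_n$ and radii $r_n$ with $|B(x_n,r_n)|$ uniformly bounded (a single edge of large weight contributes a small ball) while $r_n\to\infty$, contradicting Lemma \ref{lem:uniformbounded}. Conversely, uniform bounded geometry also bounds the weights from below, since vanishing weights would let arbitrarily large finite configurations collapse into a tiny ball, again violating the uniform ratio bound against balls elsewhere in the infinite tree. With edge weights trapped between two positive constants $0<a\leq w\leq b$, each successive growth radius $r_{n+1}^x$ exceeds $r_n^x$ by at least $a$, so $r_n^x$ grows at most linearly in $n$; in particular $r_n^x\leq bn\leq n^{C}$ for suitable $C$, giving polynomial (indeed linear) increment at $x$.

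Having verified the hypotheses of Theorem \ref{th:polygrowth}, the conclusion is immediate: a locally finite space with uniform bounded geometry and polynomial increment at one point that admits a bi-lipschitz embedding into a finite-dimensional Euclidean space must have property SN. Taking the contrapositive, a metric tree $T$ with uniform bounded geometry and without property SN cannot embed bi-lipschitzly into any finite-dimensional Euclidean space, which is exactly the statement. The only subtlety is confirming that the tree structure genuinely yields the two-sided weight bound from uniform bounded geometry alone.

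The hard part will be the rigorous derivation of the two-sided bound on edge weights from uniform bounded geometry, and more precisely ensuring that the lower bound on weights is truly forced. The upper bound follows cleanly from Lemma \ref{lem:uniformbounded} applied to the ball around an endpoint of a heavy edge. The lower bound is more delicate: I would need to use the \emph{bounded degree} of the tree together with uniform bounded geometry to show that one cannot pack growing numbers of vertices into balls of fixed radius, since otherwise the ratio $|B(x,r)|/|B(y,r)|$ would blow up for vertices $y$ in a sparse region against vertices $x$ in the dense region. If for some reason only a one-sided bound survives, an alternative is to bound $r_n^x$ directly by counting: uniform bounded geometry controls $|B(x,r)|$ from below by a fixed fraction of the maximal ball, and combined with local finiteness this caps how slowly balls can grow, yielding the polynomial increment without explicitly bounding individual weights.
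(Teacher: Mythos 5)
Your reduction to Theorem \ref{th:polygrowth} is the right instinct---the paper's own proof also runs through that argument---but the lemma you hang everything on is false: uniform bounded geometry does \emph{not} force an upper bound on edge weights. Consider the bi-infinite weighted path (a metric tree, and for trees the compatibility axioms (\ref{eq:shortpath}) and (\ref{eq:metriccompatibility}) are automatic) in which every edge has weight $1$ except that the edge joining $v_{4^n}$ to $v_{4^n+1}$ has weight $n$, for each $n\geq1$. The weights are unbounded, yet every ball of radius $r$ in this space has cardinality uniformly comparable to $2r$ (the sparse heavy edges can shave off at most a bounded fraction), so the space has uniform bounded geometry with a constant close to $3$. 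Your argument breaks exactly at the parenthetical ``a single edge of large weight contributes a small ball'': the endpoint of a heavy edge still carries an infinite run of unit-weight edges on its own side, so balls centered at it grow linearly in the radius and Lemma \ref{lem:uniformbounded} is never violated. A heavy edge yields a small ball only when the entire subtree on one side of it is metrically sparse, and ruling that out is genuine work, not a free consequence of the definition. (The claimed lower bound on weights fails similarly---sparse edges of weight $1/n$ are also compatible with uniform bounded geometry---and your deduction is anyway inverted: ``$r_{n+1}^x$ exceeds $r_n^x$ by at least $a$'' gives a lower bound $r_n^x\geq an$, not an upper one; what a weight bound $w\leq b$ would give is $r_{n+1}^x\leq r_n^x+b$, hence $r_n^x\leq bn$, so only the upper bound on weights is ever relevant.)

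It is also worth noting that the paper never proves polynomial increment for trees, and for good reason. The only use of polynomial increment in the proof of Theorem \ref{th:polygrowth} is to guarantee $\limsup_{r\to\infty}|\Gamma^x_r|=\infty$, and the paper proves exactly this weaker statement for trees with uniform bounded geometry: assuming $|\Gamma^x_r|\leq M_x$ for all $r$, it chooses points $y_N$ with $d(x,y_N)=r^x_{k(N)}>N$, uses additivity of the metric along tree geodesics to show $B(y_N,N)\subseteq B(y_N,r^N_{2M_x})$, proves by induction (via Lemma \ref{lem:uniformbounded}) that $r^N_{2M_x}\leq R_{2M_x}$ uniformly in $N$, and thus obtains uniformly bounded balls $B(y_N,N)$, contradicting Lemma \ref{lem:uniformbounded}. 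Your fallback suggestion---that uniform bounded geometry ``caps how slowly balls can grow''---is not a proof: the definition compares balls of the \emph{same} radius at \emph{different} centers and says nothing by itself about growth in the radius at a fixed center; converting it into such a statement is precisely the content of the paper's $y_N$ argument, and it uses the tree structure essentially. So either you reproduce that argument (proving the $\limsup$ statement rather than polynomial increment), or you need a genuinely new idea; the edge-weight route cannot be repaired.
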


\begin{proof}
By the proof of Theorem \ref{th:polygrowth}, it suffices to show that there is a vertex $x$ such that $\lim\sup_{r\rightarrow\infty}|\Gamma^x_r|=\infty$. We will actually prove that this is true for all $x$. By contradiction, let $|\Gamma^x_r|\leq M_x$, for all $r$. Let $N>0$ and choose $k(N)$ big enough such that the interval of natural numbers $[r_{k(N)}^x-N,r_{k(N)}^x+N]$ intersects at most two intervals of the shape $[2^r,2^{r+1}]$. Let $y_N\in T$ be a vertex at distance $r_{k(N)}^x$ from $x$. We have
\begin{align}\label{eq:inequality}
\left|\Gamma^x\cap\left[r_{k(N)}^x-N,r_{k(N)}^x+N\right]\right|\leq2M_x
\end{align}
Also, we may suppose that $k(N)$ is chosen in such a way that $r_{k(N)}^x=d(y_N,x)>N$. This will be useful later.\\

Now, to simplify the notation, let us denote $\Gamma^{y_N}=\{r_1^N,r_2^N,\ldots\}$. By Lemma \ref{lem:uniformbounded}, there is $R_1$ such that $r_1^N\leq R_1$, for all $N$. Similarly, there is $R_2$ such that $r_2^N\leq R_2$, for all $N$. Indeed, assume by contradiction that $\limsup_{N\rightarrow\infty}r_2^N=\infty$. We have $B(y_N,r_2^N-1)=B(y_N,R_1)$. By the bounded geometry of $T$ we know that there is a constant $C'$ such that $|B(y_N,R_1)|\leq C'$ for all $N$ and so we would have a sequence $B(y_N,r_2^N-1)$ of arbitrarily large balls having cardinality at most $C'$. This is not possible thanks to an obvious application of Lemma \ref{lem:uniformbounded}. It is now clear that by induction we get that for all $h\in\mathbb N$, there is $R_h$ such that $r_h^N\leq R_h$, for all $N$.\\

After these preliminary observations, we now study the structure of the ball $B(y_N,N)$. Let $z\in B(y_N,N)$. Since $T$ is a tree and $d(y_N,x)>N$ we have only two possibilities:
\begin{enumerate}
\item The shortest path connecting $x$ to $y_N$ can be extended to a shortest path connecting $x$ to $z$;
\item The shortest paths connecting $x$ to $z$ and $x$ to $y_N$ intersect up to a certain vertex $\overline z$.
\end{enumerate}
Let us consider the first case. Let $\ell$ be the length of the shortest path connecting $y_N$ to $z$. By definition of the sequence $r_n^x$, we have $d(x,z)\geq r_{k(N)+\ell}^x$. Therefore, since the distance is additive along shortest paths, we have
\begin{align}\label{eq:firstcase}
d(y_N,z)=d(x,z)-d(x,y_N)\geq r_{k(N)+\ell}^x-r_{k(N)}^x
\end{align}
Now, let us consider the second case. Let $\overline\ell$ be the length of the shortest path connecting $y_n$ to $\overline z$. By definition of the sequence $r_n^x$, we have $d(x,\overline z)\leq r_{k(N)-\overline\ell}^x$. Therefore, since the metric is additive along shortest paths, we have
\begin{align}\label{eq:secondcase}
d(z,y_N)=d(z,\overline z)+d(x,y_N)-d(x,\overline z)\geq d(x,y_N)-d(x,\overline z)\geq r_{k(N)}^x-r_{k(N)-\overline\ell}^x
\end{align}
Now, we know that $d(y_N,z)\leq N$ and therefore, by the inequalities (\ref{eq:firstcase}) and (\ref{eq:secondcase}), one of the following holds:
\begin{align*}
\text{Either }\qquad N\geq r_{k(N)+\ell}^x-r_{k(N)}^x\qquad\text{ or }\qquad N\geq r_{k(N)}^x-r_{k(N)-\overline\ell}^x
\end{align*}
In other words, one of the following holds:
\begin{align*}
\text{Either }\qquad r_{k(N)+\ell}^x\leq r_{k(N)}^x+N\qquad\text{ or }\qquad r_{k(N)-\overline\ell}^x\geq r_{k(N)}^x-N
\end{align*}
Together with (\ref{eq:inequality}), this means that there are only $2M_x$ free slots for $\ell$ and $\overline\ell$; namely,
$$
B(y_N,N)\subseteq B(y_N,r_{2M_x}^N)
$$
Now, we know, by the first step of the proof, that there is a uniform bound $r_{2M_x}^N\leq R_{2M_x}$, for all $N$. Thus, for all $N$, we have
$$
B(y_N,N)\subseteq B(y_N,R_{2M_x})
$$
Now, since $T$ has bounded geometry, there is a constant $C'$ depending only on $2M_x$ such that $B(y_N,R_{2M_x})\leq C'$, for all $N$. Therefore, $|B(y_N,N)|\leq C'$, for all $N$. This is a contradiction, thanks to Lemma \ref{lem:uniformbounded}.
\end{proof}

\begin{note}\label{note:referencemetrictree}
{\rm Ours is not the first result concerning bilipschitz embeddings of metric trees into Hilbert spaces. Lee, Naor and Peres proved that a metric tree has a bilipschitz embedding into a Hilbert space if and only if it does not contain arbitrarily large complete binary trees with uniform bounded distortion (see \cite{Le-Na-Pe09}, Theorem 1.1). Gupta, Krauthgamer and Lee proved that a metric tree admits a bi-lipschitz embedding into a finite dimensional Hilbert space if and only if there is a constant $C$ such that every ball can be covered by $C$ balls of half radius (see \cite{Gu-Kr-Le02}, Theorem 2.5, and \cite{Le-Na-Pe09}, Theorem 2.12).}
\end{note}

\begin{remark}\label{rem:coarseSN}
{\rm A natural question is whether the box-space defined by a countable family of connected regular graphs has property SN if and only if this family is an expander. The answer is strongly negative. Indeed, as observed by Antoine Gournay, every family of expanders does have property SN. Let indeed $G_n$ be a family of expanders and let $G=\bigsqcup G_n$ be a box-space defined by the $G_n$'s. We recall that the box-space is defined just putting a metric on the disjoint union of the $G_n$'s which extends the graph-metric on each $G_n$ and verifies the property that $d(G_n,G_m)\to\infty$, as $m+n\to\infty$. Fix $k\geq1$. Since it is known that the diameter of the $G_n$'s increases logarithmically (see, for instance, p.455 in \cite{Ho-Li-Wi06}), we may find $n_k$ such that $\text{diam}(G_n)\geq k$, for all $n\geq n_k$, and $d(G_n,G_{n+1})>k$, for all $n\geq n_k$. Define a sequence of sets $(F_k^n)_{n\geq n_k}$ by
\begin{align*}
F_k^n=\bigcup_{i=1}^n G_i\cup\left(G_{n+1}\setminus B(x,k)\right)
\end{align*}
where $x$ is arbitrarily chosen inside $G_{n+1}$. By the construction, it follows that $cN_k(F_k^n)=\bigcup_{i=1}^{n+1}G_i$, for all $n\geq n_k$. Therefore
$$
\iota_k(G)=\inf\left\{\frac{|dB_k(A)|}{|A|} : A\subseteq G, 0<|A|<\infty \right\}=0
$$
for all $k$, since it goes to zero along the sequence $F_k^n$.\\

This bad behavior put in evidence the fact that Property SN is not a good coarse property and, in order to study coarse embeddings, one should use some other property. A very recent e-print by Brodzki, Niblo, \v{S}pakula, Willett and Wright contains a new property, called uniform local amenability, that might do the job (see Definition 2.2 in \cite{BNSWW12}). Moreover, their property can be easily rewritten in terms of a \emph{coarse isoperimetric constant} opening the way to a series of questions that merit attention in further researches.}
\end{remark}

\section{The zoom isoperimetric constants}\label{se:zoomiso}

In this section we want to introduce another invariant, which is, in some sense, more precise than the isoperimetric constant. In fact, the isoperimetric constant looks at the best behavior inside the metric space and so the information encoded in it could be very far from what a local observer really sees. In order to make this obscure sentence clearer, let us consider the following example. Consider a regular tree of degree $d\geq3$ and attach over a vertex a copy of the integers. We get a connected graph of bounded degree with isoperimetric constant equal to zero. This graph is then amenable in every sense: it has the property SN and it is amenable in the sense of Definitions \ref{def:amenable2} and \ref{def:amenable3}. This happens because the isoperimetric constant forgets the non-amenable part. On the other hand, consider an observer living in the vertex $x$. His/her way to observe the universe where (s)he lives is to \emph{increase his/her knowledge step by step}, considering at the first step the set $A_0=\{x\}$, then $A_1=dN_1(\{x\})$ and so on: his/her point of view on the universe is given by the sequence $A_n=dN_n(\{x\})$. It is clear that, wherever $x$ is, the sequence $A_n$ is eventually doubling, in the sense that $|A_n|\geq2|A_{n-1}|$. So, the isoperimetric constant says that the universe is amenable, but every local observer would say that the universe is non-amenable. This lack of agreement is the reason why in this section we try to introduce a new invariant which takes into account all these local observations.\\

Let $(X,d)$ be a locally finite metric space, $x\in X$ and $k,n$ be two positive integers. As for the isoperimetric constant, we fix $k$ and we are going to look at the sequence, in the parameter $n\geq1$, of the ratios $\frac{|dN_{nk}(x)|}{|dN_{(n-1)k}(x)|}$. There are now two different ways to take out information from this sequence, one is taking the infimum, the other one is taking the limsup. Therefore, denote by

\begin{align}
\underline\zeta_k(x)=\inf\left\{\frac{|dN_{nk}(x)|}{|dN_{(n-1)k}(x)|}, n\geq1\right\}
\end{align}
\begin{align}
\overline\zeta_k(x)=\limsup_{n\rightarrow\infty}\frac{|dN_{nk}(x)|}{|dN_{(n-1)k}(x)|}
\end{align}



Now, denote by
$$
\underline\zeta(x)=\sup_{k\geq1}\underline\zeta_k(x)\qquad\overline\zeta(x)=\sup_{k\geq1}\overline\zeta_k(x)
$$
and then
$$
\underline\zeta^+(X)=\sup_{x\in X}\underline\zeta(x)\qquad\overline\zeta^+(X)=\sup_{x\in X}\overline\zeta(x),x\in X$$

Observe that

\begin{align}
\iota(X)\leq\underline\zeta^+(X)-1\leq\overline\zeta^+(X)-1
\end{align}

In particular, if $X$ is the Cayley graph of a finitely generated group, then $\underline\zeta^+(X)=1$ or $\overline\zeta^+(X)=1$ implies amenability. The converse is very far from being true. The properties $\underline\zeta^+(X)=1$ and $\overline\zeta^+(X)=1$, are stronger than amenability and they are related to the growth of a group, as we now show. Let $G$ be a finitely generated group, fix a finite symmetric generating set $S$ and consider the locally finite metric space $(G,d_S)$, where $d_S$ denotes the word-metric induced by $S$. Denote by $B(n)$ the closed ball of radius $n$ about the identity. It is well-known that the sequence $\sqrt[n]{|B(n)|}$ is always convergent (see, for instance, \cite{Ha00}, VI.C, Proposition 56). We recall the following definition:

\begin{definition}
A group is said to have sub-exponential growth if $\lim_{n\rightarrow\infty}\sqrt[n]{B(n)}=1$. Otherwise, it is said to have exponential growth.
\end{definition}

It is well-known that there is no perfect correspondence between amenability and growth rate of a group. Indeed, groups of sub-exponential growth are amenable, but there are amenable groups generated by $m$ elements whose growth approach the one of the free group on $m$ generators (see \cite{Ar-Gu-Gu05}). There is instead a \emph{perfect} correspondence using the property that $\underline\zeta^+(G)=1$.

\begin{proposition}\label{prop:locallyamenable}
A group $G$ has sub-exponential growth if and only if $\underline\zeta^+(G)=1$.
\end{proposition}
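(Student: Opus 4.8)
The plan is to strip the definition of $\underline\zeta^+$ down to the ball-counting function. Writing $b(n)=|B(n)|$ for the ball of radius $n$ about the identity, I would first record the reduction to $b$. Since a Cayley graph is vertex-transitive and, as observed right after Definition~\ref{def:isoperimetric}, $dN_m(\{x\})=\{y:d(y,x)\le m\}$ for a single point $x$, the cardinality $|dN_m(x)|$ is independent of $x$ and equals $b(m)$. Hence each $\underline\zeta_k(x)$ is independent of $x$, and
\[
\underline\zeta^+(G)=\sup_{k\ge1}\,\inf_{n\ge1}\frac{b(nk)}{b((n-1)k)}=:\sup_{k\ge1}f(k).
\]
I would also recall the standard facts: $b$ is submultiplicative, $b(m+n)\le b(m)b(n)$, so by Fekete's lemma $\omega:=\lim_n b(n)^{1/n}=\inf_n b(n)^{1/n}$; in particular $b(n)\ge\omega^n$ for all $n$, the balls are nested so every ratio $b(nk)/b((n-1)k)\ge1$, and sub-exponential (resp.\ exponential) growth means exactly $\omega=1$ (resp.\ $\omega>1$).

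For the implication ``sub-exponential $\Rightarrow\underline\zeta^+=1$'', which I would present first because it is short, fix $k$ and telescope: $\prod_{n=1}^{N}\frac{b(nk)}{b((n-1)k)}=b(Nk)$, so the geometric mean of the $N$ factors is $b(Nk)^{1/N}=\bigl(b(Nk)^{1/(Nk)}\bigr)^{k}\to\omega^{k}=1$. Since every factor is $\ge1$, an infimum $f(k)=c>1$ would force the geometric mean to stay $\ge c$, a contradiction; hence $f(k)=1$ for all $k$ and $\underline\zeta^+(G)=1$.

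For the reverse implication I would argue the contrapositive: if $\omega>1$ then $\sup_k f(k)>1$. The one structural observation that I would use for free is a super-multiplicativity of $f$: writing a single $jk$-ratio as a product of $j$ consecutive $k$-ratios,
\[
\frac{b(njk)}{b((n-1)jk)}=\prod_{i=0}^{j-1}\frac{b((nj-i)k)}{b((nj-i-1)k)}\ge f(k)^{\,j},
\]
so $f(jk)\ge f(k)^{j}$. Consequently it is enough to produce a \emph{single} scale $k_0$ with $f(k_0)>1$, for then $f(jk_0)\ge f(k_0)^{j}\to\infty$ and $\sup_k f(k)=\infty$. To produce such a $k_0$ I would aim to show that exponential growth prevents the consecutive balls $B((n-1)k)\subseteq B(nk)$ from having ratio arbitrarily close to $1$; the cleanest target is the bound $f(k)\ge\omega^{k}$, equivalently $b(nk)\ge\omega^{k}b((n-1)k)$ for every $n$, which already finishes the proof at $k=1$.

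This last target is exactly where the real difficulty sits, and I expect it to be the main obstacle. It cannot follow from submultiplicativity alone: one can build a non-decreasing submultiplicative sequence with $\omega>1$ whose graph is a sawtooth with flat plateaus of length tending to infinity (each plateau forcing $g(m):=\log b(m)-m\log\omega$, which is non-negative and subadditive, to drop at the maximal rate), and for such a sequence $f(k)=1$ for every $k$ while $\omega>1$. Thus a genuinely group-theoretic input is unavoidable, and the heart of the proof must be a ``no long almost-flat window'' lemma for Cayley graphs: for $\omega>1$ the ratios $b(m+1)/b(m)$ cannot tend to $1$ along arbitrarily long stretches. I would try to extract this from the homogeneity of the Cayley graph --- for instance by translating the annulus $\{m<|g|\le m+k\}$ by elements on a geodesic ray to manufacture a definite multiplicative increase of each radius-$k$ window, or from a log-concavity/monotonicity property of $n\mapsto b(n)$ that makes the ratios decrease to $\omega$ (as they visibly do for free groups, where $f(1)=\omega$). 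Establishing this lemma is the delicate step; granting it, the super-multiplicativity above converts $f(k_0)>1$ into $\sup_k f(k)=\infty$ and the proposition follows.
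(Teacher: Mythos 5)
Your first half --- sub-exponential growth implies $\underline\zeta^+(G)=1$ --- is correct and is essentially the paper's own argument in contrapositive form: the paper (crediting Kassabov) assumes $\underline\zeta_k(e)=c>1$, gets $|B(nk)|\geq c^n$ by induction, and concludes $\limsup_s\sqrt[s]{|B(s)|}\geq c^{1/k}>1$; your telescoping/geometric-mean computation is the same argument. The reduction of $\underline\zeta^+$ to the ball-counting function via vertex-transitivity and the observation following Definition \ref{def:isoperimetric} is also fine, and the super-multiplicativity $f(jk)\geq f(k)^j$ is a correct (and pleasant) observation that does not appear in the paper.

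The genuine gap is the other direction, and you flag it yourself: you never prove that exponential growth forces $f(k_0)>1$ at some scale $k_0$. You reduce the problem to a ``no long almost-flat window'' lemma and list possible attacks (translating annuli along a geodesic ray, log-concavity of $n\mapsto |B(n)|$), but none is carried out, so as written the proposal establishes only half of the proposition --- and it is the half where the real content of the ``only if'' statement sits. Your diagnosis of why this half is hard is, for what it is worth, accurate: monotonicity, submultiplicativity and $\omega>1$ alone do not imply $f(k)>1$ for some $k$ (such sequences with arbitrarily long almost-flat aligned windows do exist, although building them takes more care with subadditivity than your sawtooth sketch suggests). For comparison, the paper disposes of this direction in one line: from $\lim_n\sqrt[n]{|B(n)|}=a>1$ it asserts $|B(n)|\sim a^n$, deduces $\inf_n |B(n+1)|/|B(n)|>1$, i.e. $\underline\zeta_1(e)>1$, and hence $\underline\zeta^+(G)>1$, all at the single scale $k=1$. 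In other words, the group-theoretic input you identified as unavoidable is exactly what the paper compresses into the unproved asymptotic $|B(n)|\sim a^n$; a complete write-up along your lines would have to either prove such an estimate or cite it explicitly, and your text does neither.
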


\begin{proof}
Let $G$ be a group with exponential growth. It follows that $\lim_n\sqrt[n]{B(n)}=a>1$ and then $|B(n)|\sim a^n$ which implies that
\begin{align}
\inf\left\{\frac{|B(n+1)|}{|B(n)|},n\in\mathbb N\right\}>1
\end{align}
It follows that $\underline\zeta(e)>1$, so $\underline\zeta^+(G)>1$.
\\\\
Let us prove the converse. This can be proved making use of deep results, as in the proof of the next Proposition. We thank Martin Kassabov for providing the following elementary proof in \cite{MO1}. Suppose by contradiction, that there exists $x\in G$ such that $\underline\zeta_k(x)>1$. By homogeneity of the Cayley graph of a finitely generated group, we may suppose that $x=e$, the identity in $G$; i.e., $\underline\zeta_k(e)=c>1$. By induction, one easily gets that $B(nk)\geq c^n$. Then

$$
\lim\sup_s\sqrt[s]{B(s)}\geq\lim\sup_n\sqrt[nk]{B(nk)}\geq\lim\sup_n\sqrt[nk]{c^n}=\sqrt[k]{c}>1
$$

showing that $G$ has exponential growth.
\end{proof}

The study of the property $\overline\zeta^+(X)=1$, when $X$ is the Cayley graph of a finitely generated group, leads to a problem that seems to be open. With the notation above, let us recall the following

\begin{definition}\label{def:polynomialgrowth}
A finitely generated group is said to have polynomial growth if there are a positive integer $k$ and a positive real number $C$ such that $B(n)\leq Cn^k$, for all $n$.
\end{definition}

\begin{proposition}\label{prop:polygrowth}
Let $X$ be the Cayley graph of a finitely generated group $G$.
\begin{enumerate}
\item If $G$ has polynomial growth, then $\overline\zeta^+(X)=1$.
\item If $\overline\zeta_k(X)=1$ for all $k$, then $G$ has sub-exponential growth.
\end{enumerate}
\end{proposition}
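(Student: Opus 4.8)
The plan is to translate both statements into facts about the sequence of cardinalities of balls. Since $X$ is a Cayley graph, it is vertex-transitive, so $\overline\zeta_k(x)$ is independent of $x$; hence $\overline\zeta^+(X)=\overline\zeta(e)=\sup_{k\geq1}\overline\zeta_k(e)$, and likewise the hypothesis of part (2) means $\overline\zeta_k(e)=1$ for all $k$. Moreover, by the observation following Definition \ref{def:isoperimetric}, in a Cayley graph $dN_m(\{e\})=\{y:d(y,e)\leq m\}=B(m)$, so that $dN_{nk}(\{e\})=B(nk)$ and
\[
\overline\zeta_k(e)=\limsup_{n\to\infty}\frac{|B(nk)|}{|B((n-1)k)|}.
\]
Everything is thus encoded in the consecutive ratios of balls along the arithmetic progression of radii $0,k,2k,\dots$, with the convention $|B(0)|=1$.

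For part (1) I would invoke the \emph{precise} volume asymptotics for groups of polynomial growth. By Gromov's theorem such a group is virtually nilpotent, and by the Bass--Guivarc'h--Pansu volume formula there are an integer $d\geq0$ and a constant $c>0$ with $|B(n)|\sim c\,n^d$ as $n\to\infty$. Then for each fixed $k$ the ratio $|B(nk)|/|B((n-1)k)|$ is asymptotic to $(n/(n-1))^d$, which tends to $1$; hence $\overline\zeta_k(e)=1$ for every $k$ and $\overline\zeta^+(X)=1$. The main obstacle here is precisely that one cannot get away with the elementary two-sided estimate $c_1 n^d\leq|B(n)|\leq c_2 n^d$: that bound only yields $\overline\zeta_k(e)\leq c_2/c_1$, and an oscillating volume profile satisfying such bounds could well have $\limsup$ of the ratios strictly above $1$. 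The sharp asymptotic $|B(n)|\sim c\,n^d$ is what rules this out, so the deep Gromov--Pansu result is genuinely needed (consistent with the remark preceding this proposition).

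Part (2) is elementary and runs by contraposition. Recall from \cite{Ha00} that $a:=\lim_n\sqrt[n]{|B(n)|}$ exists, and since $n\mapsto\log|B(n)|$ is subadditive, Fekete's lemma gives $\log a=\inf_n\frac1n\log|B(n)|$, so $|B(n)|\geq a^n$ for all $n$. Suppose $a>1$. Telescoping along the progression of radii,
\[
|B(Nk)|=\prod_{n=1}^{N}\frac{|B(nk)|}{|B((n-1)k)|},
\]
so the geometric mean of the first $N$ ratios equals $|B(Nk)|^{1/N}\geq a^{k}$. A standard comparison shows that the $\limsup$ of a sequence dominates the $\limsup$ of its running geometric means, whence $\overline\zeta_k(e)=\limsup_n\frac{|B(nk)|}{|B((n-1)k)|}\geq a^{k}>1$, contradicting the hypothesis $\overline\zeta_k(e)=1$. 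Therefore $a=1$, i.e. $G$ has sub-exponential growth. No deep input is required for this direction; the only care needed is the (routine) inequality between the limsup of a sequence and that of its geometric averages.
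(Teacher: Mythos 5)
Your proof is correct and follows essentially the same route as the paper: part (1) invokes the exact polynomial volume asymptotics of Guivarc'h--Bass--Gromov--Pansu and computes the ratio limit, and part (2) is the contrapositive comparison between the limsup of consecutive ball ratios and the exponential growth rate $\lim_n\sqrt[n]{|B(n)|}$. If anything, your part (2) (Fekete's lemma plus the geometric-mean inequality) is a more rigorous rendering than the paper's, which merely points back to the earlier sketchy claim that exponential growth forces $|B(n)|\sim a^n$.
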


\begin{proof}
\begin{enumerate}
\item By a celebrated result, due to many authors, a group of polynomial growth has growth which is exactly polynomial; i.e. $|B(n)|=Cn^d+o(n^d)$ (see \cite{Gu71}, \cite{Ba72}, \cite{Gr81}, \cite{Pa83}). It follows, by direct computation, that $\overline\zeta^+(X)=1$.
\item This proof is basically the same as the first part of the previous proposition.
\end{enumerate}
\end{proof}

It would be interesting to understand what can happen for groups of sub-exponential growth that do not have polynomial growth.

\begin{problem}\label{prob:intermediategrowth}
What can we say about $\overline\zeta_k(X)$, when $X$ is the Cayley graph of a finitely generated group of intermediate growth?
\end{problem}

This question in fact arose in a series of comments by Ben Green, Martin Kassabov and the author to author's MathOverflow question \cite{MO1}. As argued by Martin Kassabov and checked by the author himself, the estimations recently proved in \cite{Ba-Er10} and \cite{Ka-Pa11} are not strong enough to find either an example of a group of intermediate growth such that $\overline\zeta_k(X)=1$, for all $k$, or an example of a group of intermediate growth such that $\overline\zeta_k(X)>1$, for at least an index $k$.

\end{document}